\documentclass[12pt]{amsart}
\usepackage{amsfonts,amsmath,amssymb,ifthen,latexsym,calc,cite}
\usepackage[a4paper]{geometry}
\usepackage[colorlinks]{hyperref}
\usepackage[capitalize]{cleveref}

\newtheorem{thm}{Theorem}[section]
\newtheorem{cor}[thm]{Corollary}
\newtheorem{lem}[thm]{Lemma}
\newtheorem{prop}[thm]{Proposition}
\theoremstyle{definition}

\newtheorem{conj}[thm]{Conjecture}

\theoremstyle{remark}

\parskip3pt
\numberwithin{equation}{section}
\numberwithin{figure}{section}
\numberwithin{table}{section}

\def\<{\;<\;}
\def\={\;=\;}
\def\>{\;>\;}

\def\rmand{\quad\text{ and }\quad}
\def\sg{\mathfrak{S}}

\crefname{ineq}{Ineq.}{Ineqs.}
\Crefname{ineq}{Inequality}{Inequalities}
\creflabelformat{ineq}{~\upshape(#2#1#3)}
\crefname{rec}{Recurrence}{Recurrences}
\Crefname{rec}{Recurrence}{Recurrences}
\creflabelformat{rec}{~\upshape(#2#1#3)}
\crefname{rl}{Relation}{Relations}
\crefname{rl}{Relation}{Relations}
\creflabelformat{rl}{~\upshape(#2#1#3)}
\crefname{fml}{Formula}{Formulas}
\crefname{fml}{Formula}{Formulas}
\creflabelformat{fml}{~\upshape(#2#1#3)}
\crefname{rst}{Result}{Results}
\crefname{rst}{Result}{Results}
\creflabelformat{rst}{~\upshape(#2#1#3)}
\crefrangeformat{equation}{Eqs.~(#3#1#4) ---~(#5#2#6)}

\crefname{def}{Def.}{Defs.}
\Crefname{def}{Def.}{Defs.}
\creflabelformat{def}{~\upshape(#2#1#3)}
\crefname{thm}{Theorem}{Theorems}
\Crefname{thm}{Theorem}{Theorems}
\creflabelformat{thm}{~\upshape #2#1#3}
\crefname{prop}{Proposition}{Propositions}
\Crefname{prop}{Proposition}{Propositions}
\creflabelformat{prop}{~\upshape #2#1#3}
\crefname{conj}{Conjecture}{Conjectures}
\Crefname{conj}{Conjecture}{Conjectures}
\creflabelformat{conj}{~\upshape #2#1#3}
\crefname{eg}{Example}{Examples}
\Crefname{eg}{Example}{Examples}
\creflabelformat{eg}{~\upshape #2#1#3 }

\newcounter{hours}
\newcounter{minutes}
\newcommand{\printtime}{
\setcounter{hours}{\time/60}
\setcounter{minutes}{\time-\value{hours}*60}
\ifthenelse{\value{hours}<10}{0}{}\thehours:
\ifthenelse{\value{minutes}<10}{0}{}\theminutes}
\usepackage{xcolor}

\newcommand{\mdef}[1]{\textit{\textbf{#1}}}
\newcommand{\vsb}{\vskip-6pt}
\newcommand{\Stir}[2]{\genfrac{[}{]}{0pt}{}{#1}{#2}}

\begin{document}

\title{The CLLC conjecture holds for cyclic outer permutations}

\author[J.L. Gross]{Jonathan L. Gross}
\address{Department of Computer Science, Columbia University, New York, 10027 NY, USA}
\email{gross@cs.columbia.edu}

\author[T. Mansour]{Toufik Mansour}
\address{Department of Mathematics, University of Haifa, 31905 Haifa, Israel}
\email{tmansour@univ.haifa.ac.il}

\author{Thomas W. Tucker}
\address{Department of Mathematics, Colgate University, Hamilton, NY 13346, USA}
\email{ttucker@colgate.edu}

\author[D.G.L. Wang]{David G.L. Wang$^{\dag\ddag}$}
\address{
$^\dag$School of Mathematics and Statistics, Beijing Institute of Technology, 102488 Beijing, P. R. China\\
$^\ddag$Beijing Key Laboratory on MCAACI, Beijing Institute of Technology, 102488 Beijing, P. R. China}
\email{glw@bit.edu.cn}

\keywords{cyclic permutation, Hultman number, log-concave sequence, the Hermite--Biehler theorem}

\subjclass[2010]{05C10, 05A20}

\date{}

\begin{abstract}
Recently, Gross et al.\ posed the LLC conjecture for the locally log-concavity of the genus distribution of every graph, and provided an equivalent combinatorial version, the CLLC conjecture, on the log-concavity of the generating function counting cycles of some permutation compositions. In this paper, we confirm the CLLC conjecture for cyclic permutations, with the aid of Hultman numbers and by applying the Hermite--Biehler theorem on the generating function of Stirling numbers of the first kind. This leads to a further conjecture that every local genus polynomial is real-rooted.
\end{abstract}


\maketitle
\tableofcontents


The LCGD conjecture of Gross et al.~\cite{GRT89} in 1989
is that the genus polynomial of every graph is log-concave.  This has been proved for a substantial number of interesting families of graphs.  However, finding either a counterexample or a proof of the general conjecture has turned out during the past decades to be quite hard.

Most of the proofs of log-concavity for special families of graphs are based on partitioning of the genus polynomial according to the incidence of face-boundary walks on the root-vertices.
Recently, Gross et al.~\cite{GMTW15X} designed a new two-step strategy toward a proof of the general LCGD conjecture based on a new form of partition of the genus polynomial into the set of local genus polynomials at a single root-vertex.

The first step is to prove the log-concavity of the \textit{local genus polynomials} of every rooted graph $(G,v)$.  The genus polynomial of the graph $G$ is the sum of the local genus polynomials at $v$.  This first step has been transformed into the purely combinatorial \textit{CLLC conjecture}, which we describe in the next section.  The second step is to prove the existence of an ordering of local genus polynomials subject to the \textit{synchronicity relation} that was introduced in~\cite{GMTW15}.  Completion of the above two steps together would confirm the LCGD conjecture.

The exposition here is intended to be accessible both to topological graph theorists and to combinatorialists.  For additional contextual information regarding log-concavity and genus distributions, see~\cite{GMTW15}.  General background information regarding topological graph theory can be found in the books~\cite{BW09B,GT01B}.  For surveys of log-concavity see~\cite{Sta89,Bre89,Bre94}.

\bigskip
\section{\large Local Genus Polynomials and IO-Polynomials}

This paper is devoted to extending the classes of cases for which the CLLC Conjecture is affirmed, that is, for which the local genus polynomials are known to be LC.  We introduce the following notations.
\begin{description}
\item[$\sg_n$] the symmetric group on $n$ objects.
\item[$c(\pi)$]  for $\pi\in\sg_n$, the number of cycles of $\pi$.
\item[$Q_n$] the set of $n$-cycles of $\sg_n$.
\end{description}

Given an imbedding of a graph $(G,v)$,  by an \mdef{inner strand} at $v$, we mean a segment of an fb-walk the contains the edge by which the strand enters vertex $v$ and the edge by which that strand leaves $v$.  We may regard the inner strand as permuting the entry edge to the exit edge.  Thus, the imbedding induces a permutation of the edges incident at $v$, which we call the \mdef{inner permutation} \hbox{at $v$}, and which we recognize as equivalent to the rotation at $v$.

By an \mdef{outer strand} at $v$, we mean any component of the complement of the union of the set of inner strands in the union of the set of fb-walks that are incident at $v$.  Each outer strand may be regarded as permuting the edge that precedes its own first edge, in an oriented traversal of the fb-walk containing that strand, to the edge that follows its own last edge.  Thus, the imbedding also induces a second permutation of the edges incident at $v$, which we call the \mdef{outer permutation} \hbox{at $v$}.

We observe that the number of cycles in the product of the inner permutation with the outer permutation for any imbedding is the number of faces that are incident at $v$, which we call \mdef{inner faces}.   An \mdef{outer face} is a face that is not incident \hbox{at $v$}.  Via the Euler formula, the total number of faces of an imbedding determines the genus of the surface.

An assignment $\sigma$ of a rotation to every vertex of $(G,v)$ except $v$ is called an almost complete rotation system, abbreviated ACR.  By a \mdef{local genus polynomial} at $v$, we mean the genus polynomial $\Gamma_{(G,\sigma)}(z)$ for the set $I_\sigma$ of imbeddings that are consistent with $\sigma$.
\smallskip

\begin{conj}[\mdef{Local Log-Concavity Conjecture}, abbreviated as the \mdef{LLC Conjecture}]  \label{conj:LLC}
All the local genus polynomials of every rooted graph $(G,v)$ are log-concave.
\end{conj}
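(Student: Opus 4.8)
The plan is to prove the LLC Conjecture by first recasting it in the purely combinatorial form announced in the text, and then attacking that form with real-rootedness techniques. Fix a root-vertex $v$ of degree $n$ and an ACR $\sigma$. Varying the rotation at $v$ amounts to letting the inner permutation $\pi$ range over the $n$-cycles $Q_n$, while $\sigma$ fixes a single outer permutation $\omega\in\sg_n$. Since the number of inner faces of the resulting imbedding is $c(\pi\omega)$, and the Euler formula makes the genus an affine function of that count with $V$, $E$, and the outer faces held fixed, the local genus polynomial $\Gamma_{(G,\sigma)}(z)$ equals, up to a monomial factor and a substitution $z\mapsto z^{\pm1}$, the cycle-counting polynomial
\begin{equation}\label{eq:Pw}
P_\omega(z)\;=\;\sum_{\pi\in Q_n} z^{\,c(\pi\omega)}.
\end{equation}
Because log-concavity of a polynomial with nonnegative coefficients is preserved under these transformations, the LLC Conjecture is equivalent to the assertion that $P_\omega(z)$ is log-concave for every $\omega\in\sg_n$; this is the CLLC Conjecture, and I would establish the equivalence as the first step.

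Next I would prove the stronger statement that $P_\omega(z)$ is \emph{real-rooted}, which for a polynomial with nonnegative coefficients forces log-concavity. The engine is the Hermite--Biehler theorem, which certifies real-rootedness through interlacing of the real and imaginary parts of an associated complex polynomial, and which behaves well under the convolution-type operations that arise when the cycles of $\omega$ are built up one at a time. The first tractable case is $\omega\in Q_n$ itself: here both factors of $\pi\omega$ are $n$-cycles, so $c(\pi\omega)$ is governed by the Hultman numbers, and a classical identity ties their generating function to the Stirling polynomial $\sum_k\Stir{n}{k}z^k=z(z+1)\cdots(z+n-1)$, which is patently real-rooted. Applying Hermite--Biehler to this factored form yields real-rootedness of $P_\omega$ in the cyclic case, which is exactly the contribution of the present paper.

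To reach the full conjecture one must allow an arbitrary $\omega$, and the natural parametrization is by the cycle type $\lambda=(\lambda_1,\dots,\lambda_k)$ of $\omega$. I would seek a recursion that adjoins one cycle of $\omega$ at a time, expressing $P_\omega$ for cycle type $\lambda$ through the polynomials attached to the single cycles of lengths $\lambda_i$, and then invoke the stability of real-rootedness under the resulting product or interlacing structure. The main obstacle is precisely this combination step. Because $\pi$ is a single $n$-cycle, multiplying by $\omega$ \emph{mixes} the cycles of $\omega$ rather than treating them independently, so $c(\pi\omega)$ does not split as a sum over the blocks of $\lambda$, and the clean factorization available in the Hultman case is lost. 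Finding the correct convolution identity --- or an interlacing family indexed by $\lambda$ that survives this mixing and still feeds into Hermite--Biehler --- is the crux that separates the cyclic case settled here from the general CLLC Conjecture, and hence from the full LLC Conjecture.
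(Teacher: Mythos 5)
You are attempting to prove \cref{conj:LLC}, but the paper does not prove this statement --- it is a conjecture, and the paper's actual results are only the special cases of cyclic outer permutations (\cref{thm:RR:cyclic}) and involutions (\cref{thm:invl}). To its credit, your proposal reproduces essentially the paper's route for the cyclic case: reduction to the combinatorial CLLC form, the Hultman-number interpretation of Doignon--Labarre and B\'ona--Flynn (\cref{lem:BF}) tying the cycle-counting polynomial to the Stirling generating function $z(z+1)\cdots(z+n-1)$, and the Hermite--Biehler theorem (\cref{thm:HB}) applied to its even and odd parts, with Newton's theorem converting real-rootedness to log-concavity. But your final paragraph concedes that the combination step for an arbitrary cycle type $\lambda$ --- the convolution or interlacing identity that would assemble $P_\omega$ from the individual cycles of $\omega$ --- is missing, and indeed no such identity is known; the paper leaves general CLLC open and even strengthens it to the real-rootedness \cref{conj:RR} on the basis of brute-force computation only. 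So as a proof of \cref{conj:LLC} the proposal has an acknowledged and fatal gap: it establishes only what the paper establishes.

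There is also a concrete technical error in your reduction step. You assert that the local genus polynomial equals $P_\omega(z)=\sum_{\pi\in Q_n}z^{c(\pi\omega)}$ up to a monomial factor and a substitution $z\mapsto z^{\pm1}$, and that log-concavity passes through these operations. It does not: the Euler formula makes the genus an affine function of the face count with slope $-1/2$, not $\pm1$, which is precisely why the IO-polynomial of \cref{def:F} carries the exponent $\lfloor(c(\zeta\pi)-1)/2\rfloor$. Your $P_\omega$ is the paper's $G_\omega$ of \cref{def:G}, and by the parity result \cref{thm:F2G} all exponents $c(\zeta\pi)$ lie in a single residue class modulo $2$; hence $G_\omega$ has internal zero coefficients and is never log-concave in the usual sense. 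The passage from $G_\omega$ to the genuinely log-concave object $F_\omega$ requires exactly this parity theorem and the square-root substitution $F_\omega(z)=G_\omega(\sqrt z)/(\sqrt z\,)^{p(\omega)+1}$ of \cref{F2G}, not a coefficient-preserving transformation. Finally, note that the equivalence of \cref{conj:LLC} and \cref{conj:CLLC} is not proved in this paper: only the implication CLLC $\Rightarrow$ LLC appears here (via the shift argument of \cref{thm:power of z}), while the converse is quoted from \cite{GMTW15X}, so even your announced ``first step'' would have to import the companion paper's argument rather than the sketch you give.
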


For each outer permutation $\pi$, we define the \mdef{IO-polynomial}
\begin{equation}\label[def]{def:F}
F_\pi(z)=\sum_{\zeta\in Q_n}z^{\lfloor (c(\zeta\pi)-1)/2\rfloor},
\end{equation}
where permutation products are multiplied from left to right.  IO stands for ``inner-outer''.

\begin{thm} \label{thm:power of z}
Let $\pi$ be the outer permutation induced by an ACR $\sigma$ for the rooted graph $(G,v)$.  Then the local genus polynomial at $v$ can be obtained by multiplying the IO-polynomial $F_\pi(z)$ by a power of $z$ that corresponds to the number of outer faces induced by $\sigma$.
\end{thm}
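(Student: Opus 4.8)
The plan is to translate the topological statement entirely into the permutation-theoretic language established in the preceding discussion, and then to use Euler's formula as the single quantitative bridge. Let me fix notation: write $\iota$ for the inner permutation at $v$ (the rotation at $v$) and $\pi$ for the outer permutation, both permutations of the $n$ edges incident at $v$. The earlier exposition records the key fact that the inner faces, i.e. the faces incident at $v$, are in bijection with the cycles of the product $\iota\pi$, so their number equals $c(\iota\pi)$. The local genus polynomial $\Gamma_{(G,\sigma)}(z)$ sums $z^{\gamma}$ over all imbeddings in $I_\sigma$, and since $\sigma$ is fixed on every vertex except $v$, the imbeddings in $I_\sigma$ are exactly indexed by the choice of rotation $\iota$ at $v$, that is, by the $n$-cycles $\zeta\in Q_n$ (an $n$-valent vertex has its cyclic rotations in bijection with $Q_n$).

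The central step is to compute the genus $\gamma$ of the surface carrying the imbedding corresponding to a given $\zeta$. By Euler's formula, $2-2\gamma=V-E+\mathcal{F}$, where $V,E$ are the vertex and edge counts of $G$ (fixed, independent of $\zeta$) and $\mathcal{F}$ is the total face count. I would split $\mathcal{F}$ into inner faces and outer faces: $\mathcal{F}=c(\zeta\pi)+\mathcal{F}_{\mathrm{out}}$. The outer faces are those not incident at $v$; since $\sigma$ is fixed away from $v$ and the outer strands are by construction the components of the fb-walks lying in the complement of the inner strands at $v$, the outer faces and hence $\mathcal{F}_{\mathrm{out}}$ do \emph{not} depend on the choice of $\zeta$. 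This invariance is the crux of the argument and I expect it to be the main obstacle: one must argue carefully that changing the rotation at $v$ reroutes only how the fb-walks pass through $v$ (the inner structure), leaving the portion of each fb-walk away from $v$, and thus the faces avoiding $v$, combinatorially untouched. Denote this common value by $m$, the number of outer faces induced by $\sigma$.

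Combining these, $\gamma = \tfrac{1}{2}\bigl(2-V+E-c(\zeta\pi)-m\bigr)$, an affine function of $c(\zeta\pi)$ with the constant part independent of $\zeta$. Summing $z^{\gamma}$ over $\zeta\in Q_n$ therefore factors as $z^{k}$ times $\sum_{\zeta\in Q_n} z^{-c(\zeta\pi)/2}$ up to the additive constant, and matching this against \cref{def:F}, whose exponent is $\lfloor(c(\zeta\pi)-1)/2\rfloor$, pins down the power of $z$. The final bookkeeping step is to verify that the constant offset is exactly the integer $k$ counting outer faces asserted in the statement, which amounts to checking the floor convention against the parity behavior of $c(\zeta\pi)$; I would dispatch this by noting that the parity of the number of inner faces is determined by $\iota$ and $\pi$ through the standard relation between cycle counts and the sign of a permutation, so the floor in \cref{def:F} and the halving in Euler's formula are consistent across all $\zeta\in Q_n$. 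This yields $\Gamma_{(G,\sigma)}(z)=z^{m}F_\pi(z)$, as claimed.
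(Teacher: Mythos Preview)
Your approach is the same as the paper's: parametrize $I_\sigma$ by the rotation $\zeta\in Q_n$ at $v$, identify the inner-face count with $c(\zeta\pi)$, observe that the outer-face count $m$ is independent of $\zeta$ (since changing the rotation at $v$ cannot affect any face not incident at $v$), and then appeal to Euler's formula. The paper's proof is terser than yours---it states these three points in two sentences and does not write out the Euler computation---but the content is identical.

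One caution about your final bookkeeping. Your own formula $\gamma=\tfrac12\bigl(2-V+E-m-c(\zeta\pi)\bigr)$ shows that $\gamma$ \emph{decreases} in $c(\zeta\pi)$, whereas the exponent $\lfloor(c(\zeta\pi)-1)/2\rfloor$ in $F_\pi$ \emph{increases}. Hence $\Gamma_{(G,\sigma)}(z)$ is a shift of the \emph{reversed} polynomial $z^{\deg F_\pi}F_\pi(1/z)$, not of $F_\pi(z)$ itself, and the shift is the minimum genus attained over $I_\sigma$ rather than $m$. Your parity/floor discussion handles the half-integer issue but cannot fix this sign, so the identity $\Gamma_{(G,\sigma)}(z)=z^{m}F_\pi(z)$ at the end does not hold as written. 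The paper's statement and proof are equally loose on this point, and since coefficient reversal preserves log-concavity the discrepancy is immaterial for the intended application; still, you should not assert more precision than the argument actually delivers.
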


\begin{proof} \vskip-6pt
The number of inner faces for an imbedding is always equal to the product of the corresponding inner permutation and outer permutation.  The number of outer faces for an imbedding equals the difference between the total number of faces and the number of inner faces.  It does not change as the rotation at the root changes, since such a change has no effect on any outer face.  The net effect is that the local genus polynomial is a shift of the IO-polynomial.
\end{proof}

When~$\lambda=n^{k_n}(n-1)^{k_{n-1}}\cdots 1^{k_1}$ is a partition of the integer $n$, with $k_j$ parts of size~$j$ (with parts $j^{0}$ omitted),  we may say that $\lambda$ \mdef{partitions the number} $n = \sum_{j=1}^njk_j=n$ and write $\lambda\vdash n$.

The permutation~$\pi$ is said to be of \mdef{type}~$\lambda$
if the multi-set of cycle lengths of~$\pi$ is~$\lambda$.
As shown by Gross et al.~\cite{GMTW15X},
the polynomial~$F_\pi(z)$ depends only on the type of~$\pi$.
As a consequence, we can define $F_\lambda(z)$ by
\[
F_\lambda(z)\=F_\pi(z),
\]
where $\pi$ is any permutation of type $\lambda$.
We call $F_\lambda(z)$ the \mdef{IO-polynomial with respect to the partition}~$\lambda$.  When the partition of the number $n$ has only one part, we may write $F_{n\vdash n}$ to emphasize that we are regarding the subscript $n$ as a partition.  \smallskip

\begin{conj}[The CLLC conjecture]\label{conj:CLLC}
The IO-polynomial $F_\lambda(z)$ is log-concave for every partition type $\lambda$.
In other words, the polynomial~$F_\pi(z)$ is log-concave for all permutations~$\pi$.
\end{conj}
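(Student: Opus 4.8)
The plan is to translate the conjecture into a statement about a genus distribution, to write that distribution exactly through the representation theory of $\sg_n$, and then to confront the single genuinely hard point: controlling a \emph{signed} combination of real-rooted polynomials. Throughout, fix a permutation $\pi$ of type $\lambda$ with $c(\pi)$ cycles, and for $\zeta\in Q_n$ read the triple $(\zeta,\pi,(\zeta\pi)^{-1})$ as a bicolored map with a single white vertex of degree $n$ (the lone cycle of $\zeta$), black vertices of degrees $\lambda$, exactly $n$ edges, and $c(\zeta\pi)$ faces. Connectivity is automatic because $\zeta$ is a full cycle, so Euler's formula yields
\[
c(\zeta\pi)\=n+1-c(\pi)-2g,
\]
where $g\ge 0$ is the genus. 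In particular $c(\zeta\pi)$ runs over a \emph{single} parity class, so the exponent in \cref{def:F} simplifies to $\lfloor(c(\zeta\pi)-1)/2\rfloor=\lfloor(n-c(\pi))/2\rfloor-g$, and the floor never merges two distinct cycle counts. Writing $H_\lambda(g)=\#\{\zeta\in Q_n:\text{genus}=g\}$, I therefore get $F_\lambda(z)=\sum_g H_\lambda(g)\,z^{\lfloor(n-c(\pi))/2\rfloor-g}$, so the coefficient sequence of $F_\lambda$ is exactly the genus distribution $(H_\lambda(g))_g$ read in reverse. Since reversal preserves log-concavity, \cref{conj:CLLC} is equivalent to the log-concavity of these generalized Hultman numbers, the cyclic case $\lambda=(n)$ being the classical one treated in this paper.

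Next I would compute the full generating polynomial $P_\lambda(x)=\sum_{\zeta\in Q_n}x^{c(\zeta\pi)}$ by the Frobenius class-algebra formula, grouping $n$-cycles by the type $\nu$ of $\zeta\pi$. The decisive input is that an irreducible character of $\sg_n$ vanishes on an $n$-cycle unless its shape is a hook $(n-r,1^r)$, on which it equals $(-1)^r$; this collapses the sum over all irreducibles to just the $n$ hooks. Summing over $\nu$ by means of the content specialization
\[
\sum_{\nu\vdash n}z_\nu^{-1}\chi^\rho(\nu)\,x^{\ell(\nu)}\=\prod_{c\in\rho}\frac{x+\mathrm{ct}(c)}{h(c)}
\]
(the principal specialization $s_\rho(1^x)$, with $z_\nu=n!/|C_\nu|$, $\ell(\nu)$ the number of parts, $h$ the hook length and $\mathrm{ct}$ the content) turns each hook's contribution into a product of linear factors; pleasantly, the dimension factors cancel, leaving
\[
P_\lambda(x)\=\frac{|Q_n|}{n!}\sum_{r=0}^{n-1}(-1)^r\,\chi^{(n-r,1^r)}(\lambda)\,\prod_{c\in(n-r,1^r)}\bigl(x+\mathrm{ct}(c)\bigr),
\]
where the contents of the hook $(n-r,1^r)$ are the consecutive integers $-r,\dots,-1,0,1,\dots,n-1-r$, all real. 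Extracting the single occupied parity class of $P_\lambda$ and reindexing by $g$ recovers $\sum_g H_\lambda(g)t^g$, so this identity exhibits the genus generating function as a signed sum of products with real roots.

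The crux is the last step. Each hook summand above is real-rooted, but $P_\lambda$ is a \emph{signed} linear combination of them, and the weights $(-1)^r\chi^{(n-r,1^r)}(\lambda)$ change sign with $r$; a signed sum of real-rooted polynomials is precisely the generic situation in which log-concavity fails. This is exactly why the cyclic case is tractable: for $\lambda=(n)$ one has $\chi^{(n-r,1^r)}(\lambda)=(-1)^r$, so every weight is $+1$, and the resulting combination (together with the half-degree repackaging into $F$) matches the generating function $\sum_k\Stir{n}{k}x^k$ of Stirling numbers of the first kind closely enough for the Hermite--Biehler theorem to apply. My plan for general $\lambda$ is first to aim for the stronger real-rootedness of $F_\lambda(z)$---which, with the nonnegativity of its coefficients and the parity collapse of the first paragraph, yields log-concavity by Newton's inequalities and simultaneously supports the paper's further real-rootedness conjecture. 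Concretely I would seek a mutually interlacing family among the hook products $\prod_c(x+\mathrm{ct}(c))$, ordered by $r$, and show via Hermite--Biehler that the specific weight pattern $(-1)^r\chi^{(n-r,1^r)}(\lambda)$ keeps the combination inside the stable cone. Should genuine cancellation defeat interlacing, I would fall back to the Newton inequality $H_\lambda(g)^2\gge H_\lambda(g-1)\,H_\lambda(g+1)$ directly, via a Harer--Zagier--type recursion for these bicolored maps or a sign-reversing involution on the relevant pairs.

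The main obstacle is therefore entirely concentrated in controlling the sign pattern of $\chi^{(n-r,1^r)}(\lambda)$ across hooks: it is precisely the cancellation that the cyclic case avoids, and it is what makes the full CLLC conjecture resist the Hermite--Biehler argument that settles $\lambda=(n)$. Whether this cancellation can be tamed by an interlacing/stability argument or must be handled by a direct combinatorial Newton inequality is where essentially all of the remaining difficulty lies.
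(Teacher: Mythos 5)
You have not proved the statement, and neither does the paper: \cref{conj:CLLC} is a conjecture, confirmed in this paper only for the types $r1^{n-r}$ (\cref{thm:RR:cyclic}, via the fixed-point reduction of \cref{prop:fp,cor:fp}) and $2^s1^{n-2s}$ (\cref{thm:invl}). The portions of your plan that you actually carry out are correct and align closely with the paper's machinery for the cyclic case: your Euler-formula parity collapse $c(\zeta\pi)=n+1-c(\pi)-2g$ is exactly the parity statement of \cref{thm:F2G} (since $p(\pi)\equiv n-c(\pi)\pmod 2$), and your Frobenius expansion over hooks is sound --- the dimension cancellation works because $f^{\rho}\prod_{c\in\rho}h(c)=n!$ for every shape --- so that for $\lambda=(n)$, where $\chi^{(n-r,1^r)}((n))=(-1)^r$ makes every weight $+1$, the hook-content products $(x-r)(x-r+1)\cdots(x+n-1-r)$ sum (by a hockey-stick identity) to Stanley's \cref{eq:Stanley}, i.e.\ to the parity class of $z(z+1)\cdots(z+n)$ that the paper extracts via Hultman numbers (\cref{lem:BF}, \cref{thm:F2S}) before applying Hermite--Biehler (\cref{thm:HB}). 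So for cyclic $\pi$ your route is a legitimate character-theoretic rederivation of the paper's proof; it differs only in replacing the B\'ona--Flynn interpretation of Hultman numbers by the vanishing of non-hook characters on $n$-cycles.

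The genuine gap is the one you yourself flag, and it is not a small one: for general $\lambda$ the weights $(-1)^r\chi^{(n-r,1^r)}(\lambda)$ change sign, and a signed combination of real-rooted (even mutually interlacing) polynomials is generically not real-rooted, nor even log-concave; you identify no structural property of the hook character values $\chi^{(n-r,1^r)}(\lambda)$ that would keep the combination stable, and your fallbacks (a Harer--Zagier-type recursion, a sign-reversing involution) are named but not constructed. Two concrete symptoms of the gap: first, your program as stated does not even recover the involution case that the paper settles, since for $\lambda=2^s1^{n-2s}$ the values $\chi^{(n-r,1^r)}(\lambda)$ already have mixed signs --- the paper instead gets \cref{thm:invl} from the previously known $2^{n/2}$ case together with the fixed-point reduction $F_{\mu 1}(z)=(n-1)F_\mu(z)$, a tool missing from your plan that you should incorporate, as it reduces everything to partitions with no parts equal to $1$; second, aiming for real-rootedness of $F_\lambda$ for all $\lambda$ is exactly the paper's strengthened \cref{conj:RR}, which is likewise open, so your ``stronger statement first'' strategy inherits, rather than resolves, the difficulty. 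In short: correct framework, correct reconstruction of the cyclic case, but the conjecture itself remains unproven, with all of the difficulty concentrated precisely where you left it.
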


In view of Theorem \ref{thm:power of z} we see that Conjecture \ref{conj:CLLC} implies Conjecture \ref{conj:LLC}.  Moreover, it is proved in \cite{GMTW15X} that these two conjectures are equivalent.

\Cref{conj:CLLC} has been confirmed \cite{GMTW15X} for the following partition types:
\begin{equation}\label{par}
2^{n/2}\text{ (when $n$ is even)},\quad 21^{n-2},\quad 31^{n-3},\quad 41^{n-4},\rmand 2^21^{n-4},
\end{equation}
which correspond, respectively, to full involutions, transpositions, $3$-cycles, $4$-cycles,
and permutations consisting of two disjoint transpositions.

In this paper, we prove Conjecture 4.1 and Conjecture 4.2 of \cite{GMTW15X}.  That is we will prove that for every partition $\lambda$ of type $r1^{n-r}$ or type $2^s1^{n-2s}$, the polynomial $F_\lambda$ is log-concave.

\bigskip
\section{\large IO-Polynomials of Involutions are LC}\label{sec:reduction}

We prove in this section that when the outer polynomial is an involution, allowing fixed points, the IO-polynomial is log-concave.

\begin{prop}\label{prop:fp}
For any partition $\mu\vdash(n-1)$, we have $F_{\mu1}(z)\=(n-1)F_\mu(z)$.
\end{prop}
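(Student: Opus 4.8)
The plan is to fix a convenient representative and exploit the fact, recorded just above, that the IO-polynomial depends only on the cycle type. I would choose $\pi\in\sg_n$ of type $\mu1$ whose unique fixed point is the letter $n$, so that the restriction $\pi'$ of $\pi$ to $\{1,\dots,n-1\}$ is a permutation of type $\mu$ in $\sg_{n-1}$. Then $F_{\mu1}(z)=F_\pi(z)$ and $F_\mu(z)=F_{\pi'}(z)$, and the task reduces to comparing $\sum_{\zeta\in Q_n}z^{\lfloor(c(\zeta\pi)-1)/2\rfloor}$ with $\sum_{\zeta'\in Q_{n-1}}z^{\lfloor(c(\zeta'\pi')-1)/2\rfloor}$.

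The central combinatorial device is the deletion map $Q_n\to Q_{n-1}$ that erases the letter $n$ from an $n$-cycle: if $\zeta$ contains the consecutive pattern $a\to n\to b$, then its image $\zeta'$ contains $a\to b$ and agrees with $\zeta$ on every other letter. This map is $(n-1)$-to-one, because an $(n-1)$-cycle $\zeta'$ can be lifted by inserting $n$ immediately after any one of its $n-1$ letters. Thus the factor $n-1$ will appear automatically, provided I can show that each fiber contributes $n-1$ equal monomials.

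The key step, and the main thing to verify, is that the floor exponent is constant along the fibers, i.e.\ that $c(\zeta\pi)=c(\zeta'\pi')$ whenever $\zeta'$ is the deletion of $\zeta$. I would prove this by tracking the letter $n$ through the product. Since $\pi$ fixes $n$, a short computation (with the left-to-right convention) shows that in $\zeta\pi$ the letter $n$ occupies a single two-arrow stretch $a\to n\to\pi'(b)$, whereas in $\zeta'\pi'$ the corresponding stretch is the single arrow $a\to\pi'(b)$, and the two products coincide on all remaining letters. Hence $\zeta\pi$ is obtained from $\zeta'\pi'$ by splicing $n$ into one existing cycle, which leaves the number of cycles unchanged; equality of the cycle counts then forces equality of the exponents $\lfloor(c(\zeta\pi)-1)/2\rfloor=\lfloor(c(\zeta'\pi')-1)/2\rfloor$.

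Combining the two observations gives the result: grouping the sum defining $F_\pi(z)$ over the fibers of the deletion map, each fiber over a fixed $\zeta'\in Q_{n-1}$ contributes exactly $n-1$ copies of $z^{\lfloor(c(\zeta'\pi')-1)/2\rfloor}$, so $F_\pi(z)=(n-1)\sum_{\zeta'\in Q_{n-1}}z^{\lfloor(c(\zeta'\pi')-1)/2\rfloor}=(n-1)F_{\pi'}(z)$, which is the claimed identity $F_{\mu1}(z)=(n-1)F_\mu(z)$. I expect the only delicate point to be the careful bookkeeping in the cycle-count invariance; the rest is a clean fiber-counting argument. One should also double-check that the splicing argument comes out the same way under whichever multiplication convention is in force, and that the statement is read with $n\ge2$ so that $Q_{n-1}$ is nonempty.
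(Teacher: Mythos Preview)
Your proposal is correct and follows essentially the same approach as the paper: both arguments fix the permutation with $n$ as its unique fixed point, use the $(n-1)$-to-one correspondence between $Q_n$ and $Q_{n-1}$ given by deleting/inserting the letter $n$, and verify that the cycle count of the product is preserved along each fiber by tracking the stretch $a\to n\to \pi'(b)$ versus $a\to\pi'(b)$. The paper phrases the fiber map as insertion (defining the set $S(\zeta)$ of lifts of a given $(n-1)$-cycle) while you phrase it as deletion, but the combinatorics and the key invariance $c(\zeta\pi)=c(\zeta'\pi')$ are identical.
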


\begin{proof} \vsb
Let $\sigma\in\sg_{n-1}$,
and let $\sigma'=\sigma\circ(n)$ be the permutation obtained by adding the letter~$n$ to the permutation~$\sigma$ as a fixed point.
Let $\zeta=(\zeta_1\,\zeta_2\,\cdots\,\zeta_{n-1})\in Q_{n-1}$, and let
\[
\zeta'=(\zeta_1\,\zeta_2\,\cdots\,\zeta_j\,n\,\zeta_{j+1}\,\cdots\,\zeta_{n-1})\in Q_n,
\]
where $j\in[n-1]$.
We claim that
\begin{equation}\label{c=c}
c(\zeta'\sigma')\=c(\zeta\sigma).
\end{equation}

To see this, let
$$C \= \left(q\,\ \zeta\sigma(q)\,\ (\zeta\sigma)^2(q)\,\ \ldots\,\ (\zeta\sigma)^{k-1}(q)\right)$$
be a cycle of the permutation~$\zeta\sigma$.
First suppose that $C$ does not contain the letter~$\zeta_j$.  In that case,
\begin{eqnarray*}
\left(q\,\ \zeta'\sigma'(q)\,\ (\zeta'\sigma')^2(q)\,\ \ldots\,\ (\zeta'\sigma')^{k-1}(q)\right) &=& \\
\left(q\,\ \zeta\sigma(q)\,\ (\zeta\sigma)^2(q)\,\ \ldots\,\ (\zeta\sigma)^{k-1}(q)\right) &=& C
\end{eqnarray*}
is a cycle of the permutation~$\zeta'\sigma'$.  We might say of this that the cycle $C$ is preserved by the transformation $(\zeta\to\zeta', \sigma\to\sigma')$.

Alternatively, we next suppose that the cycle $C$ does contain the letter~$\zeta_j$, say $(\zeta\sigma)^r(z)=\zeta_j$, so that
$$C \= \left(q\,\ \zeta\sigma(q)\,\  \ldots\,\ (\zeta\sigma)^r(q)=\zeta_j\,\ (\zeta\sigma)^{r+1}(q) \,\ \ldots\,\ (\zeta\sigma)^{k-1}(q)\right)$$
In this case, we observe that the cycle $C$ is transformed to
\begin{eqnarray*}
&& \left(q\,\ \zeta'\sigma'(q)\,\ (\zeta'\sigma')^2(q)\,\ \ldots\,\right) \\
&=& \left(q\,\ \zeta\sigma(q)\,\  \ldots\,\ (\zeta\sigma)^r(q)=\zeta_j\,\ \ldots\, \right) \\
&=& \left(q\,\ \zeta\sigma(q)\,\  \ldots\,\ (\zeta\sigma)^r(q)=\zeta_j\,\ (\zeta'\sigma')(\zeta_j)\,\ (\zeta'\sigma')^2(\zeta_j)\,\ \ldots\, \right) \\
&=& \left(q\,\ \zeta\sigma(q)\,\  \ldots\,\ (\zeta\sigma)^r(q)=\zeta_j\,\ \sigma'(n)\!=\!n\,\ (\zeta'\sigma')(n)\,\ \ldots\, \right) \\
&=& \left(q\,\ \zeta\sigma(q)\,\  \ldots\,\ (\zeta\sigma)^r(q)=\zeta_j\,\ n\,\ \sigma'(\zeta_{j+1})\!=\! (\zeta\sigma)(\zeta_j)\,\ \ldots\, \right) \\
&=& \left(q\,\ \zeta\sigma(q)\,\  \ldots\,\ (\zeta\sigma)^r(q)=\zeta_j\,\ n\,\ (\zeta\sigma)^{r+1}(q)\,\  (\zeta\sigma)^{r+2}(q)\,\ \ldots\, \right)
\end{eqnarray*}
which is the cycle obtained by inserting the letter~$n$
immediately after the letter~$\zeta_j$ in the cycle~$C$.
Hence, the permutations~$\zeta'\sigma'$ and~$\zeta\sigma$ have the same number of cycles.
This proves the claimed \cref{c=c}.

For any cyclic permutation $\zeta=(\zeta_1\,\zeta_2\,\cdots\,\zeta_{n-1})\in Q_{n-1}$, we define
\[
S(\zeta)\=\{(\zeta_1\,\zeta_2\,\cdots\,\zeta_j\,n\,\zeta_{j+1}\,\cdots\,\zeta_{n-1})\in Q_n\;\colon\;j\in[n-1]\}.
\]
It is clear that
\[
|S(\zeta)|=n-1
\rmand
Q_n=\sqcup_{\zeta\in Q_{n-1}}S(\zeta).
\]
We also define $c'(\eta)=\lfloor(c(\eta)-1)/2\rfloor$, for any permutation $\eta$.
By using \cref{c=c}, we can infer that
\begin{eqnarray}
F_{\sigma'}(z) &=& \sum_{\zeta'\in Q_n}z^{c'(\zeta'\sigma')}
\=\sum_{\zeta\in Q_{n-1}}\sum_{\zeta'\in S(\zeta)}z^{c'(\zeta'\sigma')}
\=\sum_{\zeta\in Q_{n-1}}\sum_{\zeta'\in S(\zeta)}z^{c'(\zeta\sigma)}\\
&=& \sum_{\zeta\in Q_{n-1}}|S(\zeta)|z^{c'(\zeta\sigma)}
\=(n-1)F_\sigma(z).\label{1fp} \notag
\end{eqnarray}
Since the local genus polynomial depends only on the type of the indicating permutation,
we have completed the proof.
\end{proof}
\smallskip

An immediate corollary is as follows,
which leads us to focus on the partitions with no parts $1$.

\begin{cor}\label{cor:fp}
For any partition $\mu\vdash(n-k)$, we have
\[
F_{\mu1^k}=(n-1)(n-2)\cdots(n-k)F_\mu(z).
\]
\end{cor}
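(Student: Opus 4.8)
The statement to prove is Corollary~\ref{cor:fp}, which generalizes Proposition~\ref{prop:fp} from removing a single fixed point to removing $k$ of them.

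\textbf{Approach.} The plan is to prove this by straightforward induction on $k$, using Proposition~\ref{prop:fp} as both the base case and the engine of the inductive step. Proposition~\ref{prop:fp} tells us that appending a single fixed point to a partition multiplies the IO-polynomial by a factor equal to one less than the new total size. The key observation is that $\mu 1^k$ is a partition of $n-k+k = n$, so each time we strip off a fixed point we are reducing a partition of some intermediate size $m$ down to a partition of $m-1$, and the multiplicative factor at that stage is $m-1$. Telescoping these factors across the $k$ steps should produce exactly the falling-factorial product $(n-1)(n-2)\cdots(n-k)$.

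\textbf{Key steps.} First I would set up the induction carefully by tracking the total size. Write $\mu \vdash (n-k)$, so that for each $i$ with $0 \le i \le k$ the partition $\mu 1^i$ partitions the integer $(n-k)+i$. The base case $k=0$ is the trivial identity $F_\mu = F_\mu$ (an empty product equals $1$). For the inductive step, suppose the formula holds for $k-1$, i.e.\ $F_{\mu 1^{k-1}} = (n-1)(n-2)\cdots(n-(k-1))\,F_\mu(z)$ as polynomials associated to partitions of sizes up through $n-1$. Then apply Proposition~\ref{prop:fp} to the partition $\mu 1^{k-1} \vdash (n-1)$: appending one more fixed point to reach $\mu 1^k \vdash n$ multiplies by the factor $n-1$. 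Wait---I must be careful about the indexing, since the factor in Proposition~\ref{prop:fp} depends on the current size. Rather than fixing $n$ and inducting downward, the cleanest bookkeeping is to prove directly that for each $i \ge 1$,
\[
F_{\mu 1^i}(z) \= \bigl((n-k)+i-1\bigr)\,F_{\mu 1^{i-1}}(z),
\]
which is exactly Proposition~\ref{prop:fp} applied with the partition $\mu 1^{i-1}$ of the integer $(n-k)+i-1$. Composing these relations for $i = 1, 2, \ldots, k$ gives
\[
F_{\mu 1^k}(z) \= \prod_{i=1}^{k}\bigl((n-k)+i-1\bigr)\,F_\mu(z) \= \bigl[(n-1)(n-2)\cdots(n-k)\bigr]\,F_\mu(z),
\]
since as $i$ ranges over $1,\dots,k$ the factor $(n-k)+i-1$ ranges over $n-k, n-k+1, \ldots, n-1$, and after reindexing this is precisely the claimed falling product.

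\textbf{Main obstacle.} There is no serious conceptual obstacle here: the result is a routine telescoping consequence of the preceding proposition, and the last sentence of Proposition~\ref{prop:fp}'s proof already guarantees that everything depends only on partition type, so the polynomials $F_{\mu 1^i}$ are well defined. The only point demanding genuine care is the index arithmetic---verifying that the accumulated product of the ``size minus one'' factors matches $(n-1)(n-2)\cdots(n-k)$ rather than some off-by-one shift. I expect that a single display exhibiting the range of the factor $(n-k)+i-1$ as $i$ runs from $1$ to $k$ will make this transparent and complete the proof.
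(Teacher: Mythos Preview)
Your proposal is correct and is exactly the paper's approach: the paper's entire proof reads ``By iterating the application of \cref{prop:fp},'' and your inductive argument is precisely that iteration with the index bookkeeping spelled out. Your verification that the factors $(n-k)+i-1$ for $i=1,\dots,k$ yield $(n-1)(n-2)\cdots(n-k)$ is accurate.
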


\begin{proof} \vsb
By iterating the application of \cref{prop:fp}.
\end{proof}

As a consequence, all the partitions in \eqref{par} for which \cref{conj:CLLC} has been confirmed, except~$2^{n/2}$, can be handled by considering the partitions of length at most~$4$. The truth for the partition $2^{n/2}\vdash n$ implies \cref{thm:invl} immediately.
\smallskip

\begin{thm}\label{thm:invl}
The local genus polynomial $F_{2^k1^{n-2k}}(z)$ is log-concave for all $k$.
In other words, the CLLC \cref{conj:CLLC} holds for all involutions.
\end{thm}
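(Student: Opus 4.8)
The plan is to reduce the general involution case to the fixed-point-free case and then quote the instance of the CLLC conjecture that has already been established.

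First I would strip off the fixed points. Regarding the partition $2^k1^{n-2k}$ of $n$ as the partition $2^k\vdash 2k$ with $n-2k$ parts of size $1$ adjoined, an application of \cref{cor:fp} gives
\[
F_{2^k1^{n-2k}}(z)\=(n-1)(n-2)\cdots(2k)\,F_{2^k}(z),
\]
so that $F_{2^k1^{n-2k}}(z)$ is a positive constant multiple of $F_{2^k}(z)$. Because the log-concavity inequalities $a_i^2\ge a_{i-1}a_{i+1}$ are homogeneous of degree two in the coefficients, multiplication by a positive constant preserves log-concavity. Hence it is enough to prove that $F_{2^k}(z)$ is log-concave.

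But $F_{2^k}(z)$ is exactly the IO-polynomial attached to the partition type $2^{n/2}$ with $n$ replaced by $2k$, that is, to a fixed-point-free involution on $2k$ objects, and this is the very first partition type recorded in \eqref{par} for which \cref{conj:CLLC} was confirmed in \cite{GMTW15X}. Therefore $F_{2^k}(z)$ is log-concave, and so is $F_{2^k1^{n-2k}}(z)$ for every $k$; this finishes the proof.

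I expect no real obstacle to remain at this point: the genuine combinatorial work has already been done in \cref{prop:fp} and \cref{cor:fp}, which absorb the fixed points into a harmless multiplicative constant, together with the previously verified $2^{n/2}$ case. Were the latter not available, the self-contained route would be to recognize $\lfloor(c(\zeta\pi)-1)/2\rfloor$ as $\lfloor k/2\rfloor$ minus the genus of the one-vertex orientable map encoded by the pairing $\pi$ and the rotation $\zeta$, turning the coefficients of $F_{2^k}(z)$ into the polygon-gluing genus counts of the Harer--Zagier formula; the hard part would then be proving that this genus distribution is log-concave, which is precisely where a Hermite--Biehler argument on a Stirling-type generating function would enter.
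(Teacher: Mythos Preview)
Your proposal is correct and follows the same route as the paper: strip off the fixed points via \cref{cor:fp}, note that a positive scalar factor preserves log-concavity, and invoke the already-established $2^{n/2}$ case from \eqref{par}. The paper states this reduction in a single sentence just before \cref{thm:invl}; your write-up merely makes the steps explicit (and the closing speculative paragraph is extraneous but harmless).
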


\bigskip
\section{\large Using Hultman Numbers}\label{sec:floor}

Essentially, \cref{conj:CLLC} concerns the number of cycles in permutations.
We recall that the number permutations in $\sg_n$ with $k$~cycles is given by
the signless Stirling number $\Stir{n}{k}$ of the first kind.  The generating function for Stirling numbers is
\begin{equation}\label{gf:Stirling1}
S_n(z)\=\sum_{\pi\in\sg_n}z^{c(\pi)}\=\sum_{k=1}^n\Stir{n}{k}z^k\=z(z+1)(z+2)\cdots(z+n-1).
\end{equation}
The \mdef{Hultman number}, a closely related number, is the number of permutations in $\sg_n$ whose cycle graph has $k$ alternating cycles (see Hultman~\cite{Hul99T}).

\begin{equation}\label[def]{def:HN}
H(n,k)\=\begin{cases}
\displaystyle \binom{n+2}{2}^{-1}\cdotp \Stir{n}{k},&\text{if $(n-k)$ is odd}\\[8pt]
\qquad 0,&\text{otherwise}.
\end{cases}
\end{equation}
With the aid of Hultman numbers $H(n,k)$, we are able to express the local genus polynomial $F_{n\vdash n}(x)$ in terms of the Stirling numbers $\Stir{n}{k}$.  Then we prove the real-rootedness of the local genus polynomials by applying the Hermite--Biehler theorem, given here as \cref{thm:HB}.
Since, by a theorem of Newton (see~\cite{Bre89} for instance), every real-rooted polynomial is log-concave,
this affirms \cref{conj:CLLC} for cyclic permutations.

The floor function in \cref{def:F} of the polynomial $F_\pi(z)$ was introduced as a technical normalization to satisfy the equivalence of the CLLC conjecture and the topological graph theoretical LLC conjecture; see \cite{GMTW15X}.  In this section, we are going to eliminate the floor function, for another technical purpose, to help deal with the normalized Hultman polynomials.  We now define the polynomial
\begin{equation}\label[def]{def:G}
G_\pi(z)\=\sum_{\zeta\in Q_n}z^{c(\zeta\pi)}.
\end{equation}
We shall express the local genus polynomial $F_\pi(z)$ in terms of the polynomial~$G_\pi(z)$; see \cref{thm:F2G}.

Let $\mu\in \sg_n$.  We call $\mu$ an \mdef{odd permutation} if it can be expressed as the composition of an odd number of transpositions.   Otherwise we call~$\mu$ an \mdef{even permutation}.  We define the \mdef{parity indicator function} for every permutation~$\mu$ as
\begin{equation}\label{def:p}
p(\mu)\=\begin{cases}
1,&\text{if $\mu$ is odd};\\[5pt]
0,&\text{if $\mu$ is even}.
\end{cases}
\end{equation}
Immediately from this definition, we have
\begin{equation}\label{parity:p=p+p}
p(\mu_1\mu_2)\;\equiv\; p(\mu_1)+p(\mu_2)\pmod2
\end{equation}
for any $\mu_1,\mu_2\in \sg_n$.  We denote by~$o(\mu)$ the number of odd-length cycles of the permutation~$\mu$, and by~$e(\mu)$ the number of even-length cycles.  From these definitions, we have
\begin{align}
c(\mu)&\=o(\mu)+e(\mu),
\rmand\label{c=o+e}\\[5pt]
n&\;\equiv\; o(\mu) \pmod2.\label{parity:o=n}
\end{align}
Note that every cycle of odd (resp., even) length is the product of an even (resp., odd) number of transpositions.  Thus we have
\begin{equation}\label{parity:p=e}
p(\mu)\;\equiv\; e(\mu)\pmod2.
\end{equation}
In particular, for $\zeta\in Q_n$, we have
\begin{equation}\label{pf:parity:p=n+1}
p(\zeta)\;\equiv\; n+1\pmod{2}.
\end{equation}
In what follows, we use $\rho_n$ to denote the cyclic permutation $(12\cdots n)$ that permutes each letter~$i$ to~$(i+1)$ modulo~$n$.
\smallskip

\begin{thm}\label{thm:F2G}
Let $\pi\in \sg_n$ and $\zeta\in Q_n$.  Then the parity of the cycle count $c(\zeta\pi)$ differs from that of the  parity indicator $p(\pi)$.  As a consequence, we have
\begin{equation}\label{F2G}
F_{\pi}(z)\=\frac{G_\pi(\sqrt{z})}{(\sqrt{z}\,)^{p(\pi)+1}}.
\end{equation}
\end{thm}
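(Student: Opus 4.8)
The plan is to first establish the parity statement by a short chain of congruences assembled from the relations already collected above, and then to deduce the identity \eqref{F2G} by comparing exponents term by term in the definitions of $F_\pi$ and $G_\pi$.

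For the parity claim, I would apply \cref{c=o+e} to the permutation $\zeta\pi\in\sg_n$, giving $c(\zeta\pi)=o(\zeta\pi)+e(\zeta\pi)$, and then reduce modulo $2$. Here \cref{parity:o=n} replaces $o(\zeta\pi)$ by $n$ and \cref{parity:p=e} replaces $e(\zeta\pi)$ by $p(\zeta\pi)$, so that $c(\zeta\pi)\equiv n+p(\zeta\pi)\pmod2$. Expanding $p(\zeta\pi)$ by the additivity \cref{parity:p=p+p} and then substituting $p(\zeta)\equiv n+1\pmod2$ from \cref{pf:parity:p=n+1} yields
\[
c(\zeta\pi)\;\equiv\;n+p(\zeta)+p(\pi)\;\equiv\;n+(n+1)+p(\pi)\;\equiv\;p(\pi)+1\pmod2,
\]
which is exactly the assertion that $c(\zeta\pi)$ and $p(\pi)$ have opposite parities.

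With this parity in hand, I would turn to the formula. Writing $c=c(\zeta\pi)$, the crucial arithmetic fact is that
\[
\frac{c-p(\pi)-1}{2}\=\Bigl\lfloor\frac{c-1}{2}\Bigr\rfloor\qquad\text{for every }\zeta\in Q_n.
\]
Since $c\equiv p(\pi)+1\pmod2$, the integer $c-p(\pi)-1$ is even, so the left-hand side is an integer; a two-case check---according to whether $c$ is odd, forcing $p(\pi)=0$, or $c$ is even, forcing $p(\pi)=1$---confirms its equality with the floor. Substituting this into $G_\pi(\sqrt z)$ then gives
\[
\frac{G_\pi(\sqrt z)}{(\sqrt z)^{p(\pi)+1}}\=\sum_{\zeta\in Q_n}(\sqrt z)^{c(\zeta\pi)-p(\pi)-1}\=\sum_{\zeta\in Q_n}z^{\lfloor(c(\zeta\pi)-1)/2\rfloor}\=F_\pi(z),
\]
where the last equality is \cref{def:F}.

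The argument is essentially bookkeeping, so no step is a genuine obstacle; the only point that requires care is the exponent identity above, where one must verify that the subtracted term $p(\pi)$ is precisely the correction needed to turn the half-integer exponent of $\sqrt z$ into the floored integer exponent of $z$. The parity statement established in the first part is exactly what guarantees this, so the two halves of the theorem dovetail with no further work.
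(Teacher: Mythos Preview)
Your proof is correct and follows essentially the same approach as the paper: the parity argument is the identical chain of congruences through \cref{c=o+e,parity:o=n,parity:p=e,parity:p=p+p,pf:parity:p=n+1}, and the derivation of \eqref{F2G} is the same termwise exponent comparison. The only cosmetic difference is that the paper splits into the two cases $p(\pi)=0$ and $p(\pi)=1$ explicitly, whereas you package both into the single identity $(c-p(\pi)-1)/2=\lfloor(c-1)/2\rfloor$; and you prove the parity claim first rather than last, which is arguably a cleaner ordering since the paper's case analysis tacitly assumes the parity statement before establishing it.
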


\begin{proof}
We begin with the definition of the polynomial $F_\pi(z)$.
\begin{equation}\label{pf2}
F_{\pi}(z)
\=\sum_{\zeta\in Q_n}z^{\lfloor(c(\zeta\pi)-1)/2\rfloor}
\=\sum_{\zeta\in Q_n\atop{c(\zeta\pi) \text{ is odd}}}z^{(c(\zeta\pi)-1)/2}
+\sum_{\zeta\in Q_n\atop{c(\zeta\pi) \text{ is even}}}z^{c(\zeta\pi)/2-1}.
\end{equation}
If the permutation $\pi$ is even, then the integer $c(\zeta\pi)$ is odd, and thus
\begin{equation}\label{F:even}
F_{\pi}(z)
\=\sum_{\zeta\in Q_n}z^{(c(\zeta\pi)-1)/2}
\=\frac{G_\pi(\sqrt{z})}{\sqrt{z}}.
\end{equation}
Otherwise the permutation $\pi$ is odd, then the integer $c(\zeta\pi)$ is even, and therefore
\begin{equation}\label{F:odd}
F_{\pi}(z)
\=\sum_{\zeta\in Q_n}z^{c(\zeta\pi)/2-1}
\=\frac{G_\pi(\sqrt{z})}{z}.
\end{equation}
From \cref{c=o+e,parity:o=n,def:p,parity:p=p+p,parity:p=e,pf:parity:p=n+1}, we deduce that
\[
c(\zeta\pi)
\=o(\zeta\pi)+e(\zeta\pi)
\,\equiv\, n+p(\zeta\pi) \,\equiv\, n+p(\pi)+p(\zeta) \,\equiv\, p(\pi)+1\!\!\!\pmod2
\]
as desired. In particular,
from \cref{pf:parity:p=n+1}, we infer that
\[
c(\zeta\rho_n)\;\equiv\; p(\rho_n)+1\;\equiv\; n\!\!\!\pmod2.
\]
In view of \cref{def:p,F:even,F:odd}, we obtain the desired \cref{F2G}.
\end{proof}

Since the local genus polynomial $F_\pi(z)$ depends only on the cycle type of the permutation~$\pi$,
from \cref{F2G}, we infer that the polynomial $G_\pi(z)$ also depends only on the cycle type of~$\pi$.
Thus we can define $G_{\lambda}(z)=G_\pi(z)$ for permutations~$\pi$ of type~$\lambda$.
For convenience to deal with the partitions $n\vdash n$ in \cref{sec:cyclic}, we write
\begin{equation}\label[def]{def:FnGn}
F_n(z)\=F_{n\vdash n}(z)
\rmand
G_n(z)\=G_{n\vdash n}(z).
\end{equation}

\bigskip
\section{\large CLLC for Cyclic Permutations}\label{sec:cyclic}

Here, as in Bogart's book~\cite{Bog90B}, a \mdef{cyclic permutation} has at most one cycle of length larger than $1$.  In this section, we confirm \cref{conj:CLLC} (the CLLC Conjecture) for cyclic permutations.

The following interpretation of the Hultman numbers $H(n,k)$ was presented, though implicitly, by Doignon and Labarre~\cite{DL07}.   It was stated explicitly later by B\'ona and Flynn~\cite[Corollary~1]{BF09}.
\begin{lem}\label{lem:BF}
The Hultman number~$H(n,k)$ is equal to the number of $(n+1)$-cycles $\zeta\in Q_{n+1}$
such that the permutation $\rho_{n+1}\zeta$ has exactly~$k$ cycles.
\end{lem}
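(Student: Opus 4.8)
The plan is to reinterpret the quantity in question as a coefficient of the polynomial $G_{n+1}(z)$ and then to read it off from the combinatorial model underlying the Hultman numbers. Since $c(\rho_{n+1}\zeta)=c(\zeta\rho_{n+1})$ for every $\zeta$ (the two products are conjugate), the number of $\zeta\in Q_{n+1}$ with $c(\rho_{n+1}\zeta)=k$ is exactly the coefficient of $z^{k}$ in
\[
\sum_{\zeta\in Q_{n+1}}z^{c(\zeta\rho_{n+1})}\=G_{\rho_{n+1}}(z)\=G_{n+1}(z),
\]
using \cref{def:G,def:FnGn}. It therefore suffices to show that this coefficient equals the Hultman number $H(n,k)$, and I would do so by exhibiting a bijection $\sg_n\to Q_{n+1}$ that carries the ``alternating cycle'' statistic of the cycle graph to the cycle count of a product of $\rho_{n+1}$ and $\zeta$.

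First I would make the cycle-graph model explicit. To a permutation $\pi=\pi_1\pi_2\cdots\pi_n\in\sg_n$ I associate the cycle graph on the vertex set $\{0,1,\dots,n\}$ whose edges split into two Hamiltonian cycles: the \emph{gray} cycle joins each value $i$ to $i+1$ modulo $n+1$, and the \emph{black} cycle joins consecutive entries of the circular word $0\,\pi_1\,\pi_2\,\cdots\,\pi_n$. Orienting the gray cycle as $0\to1\to\cdots\to n\to 0$ recovers $\rho_{n+1}$, while orienting the black cycle the same way gives the full cycle $\zeta=(0\,\pi_1\,\pi_2\,\cdots\,\pi_n)\in Q_{n+1}$. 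Because an $(n+1)$-cycle has a unique expression with $0$ in the first position, the assignment $\pi\mapsto\zeta$ is a bijection of $\sg_n$ onto $Q_{n+1}$, so every element of $Q_{n+1}$ is realized exactly once.

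The crux is to show that the alternating cycles of the cycle graph of $\pi$ are in bijection with the cycles of an explicit product of $\rho_{n+1}$ and $\zeta$. An alternating cycle is a closed walk alternating black and gray edges; following a black edge applies $\zeta^{\pm1}$ and following the next gray edge applies $\rho_{n+1}^{\mp1}$, so each alternating cycle is a single orbit of the composite move, and the number of alternating cycles equals the number of cycles of that composite --- a product of the form $\rho_{n+1}^{-1}\zeta$ once the orientations are fixed. I expect this orientation bookkeeping to be the main obstacle: one must pin down the convention for reading each edge-color so that the correct product appears, checking it, for instance, against the identity permutation (whose cycle graph attains the maximal $n+1$ alternating cycles, corresponding to $\zeta=\rho_{n+1}$ and $\rho_{n+1}^{-1}\zeta=\mathrm{id}$).

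It then remains to pass from $c(\rho_{n+1}^{-1}\zeta)$ to the $c(\rho_{n+1}\zeta)$ of the statement, and I would argue that the distribution of the cycle count over $Q_{n+1}$ is unaffected: the map $\zeta\mapsto\zeta^{-1}$ permutes $Q_{n+1}$, one has $c(AB)=c(BA)$ for all $A,B$, and $\rho_{n+1}$ is conjugate to $\rho_{n+1}^{-1}$, so that conjugating the index of summation yields
\[
\sum_{\zeta\in Q_{n+1}}z^{c(\rho_{n+1}^{-1}\zeta)}\=\sum_{\zeta\in Q_{n+1}}z^{c(\rho_{n+1}\zeta)}\=G_{n+1}(z),
\]
and the two counts agree coefficientwise. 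Finally, the parity congruence established en route to \cref{thm:F2G} (with $n$ replaced by $n+1$) gives $c(\rho_{n+1}\zeta)\equiv n+1\pmod2$, so only exponents $k$ with $n-k$ odd occur, matching the vanishing case of \cref{def:HN}. As an alternative to the bijection one could evaluate $G_{n+1}(z)$ directly from the classical enumeration of products of a fixed long cycle with a variable long cycle and compare coefficientwise with the closed form for $H(n,k)$; I would nonetheless favor the cycle-graph argument, since it is the source of the interpretation attributed to Doignon--Labarre and to B\'ona--Flynn and renders the parity condition transparent.
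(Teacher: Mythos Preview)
The paper does not prove this lemma at all: it is stated as a quotation from the literature, attributed implicitly to Doignon and Labarre and explicitly to B\'ona and Flynn, and is then used as a black box in the proof of \cref{thm:F2S}. Your proposal therefore goes well beyond the paper's treatment. What you outline is essentially the argument of those cited references: the bijection $\pi\mapsto(0\,\pi_1\cdots\pi_n)$ from $\sg_n$ onto $Q_{n+1}$, the identification of alternating cycles of the cycle graph with orbits of a product of $\rho_{n+1}^{\pm1}$ and $\zeta^{\pm1}$, and then a symmetry argument to land on the specific product $\rho_{n+1}\zeta$. The symmetry step is fine (and in fact the paper performs the analogous passage, via $\zeta\mapsto\zeta^{-1}$ and $c(\mu)=c(\mu^{-1})$, inside the proof of \cref{thm:F2S}).

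The one genuine gap in your write-up is the step you yourself flag: the ``orientation bookkeeping'' that pins down which product of $\rho_{n+1}^{\pm1}$ and $\zeta^{\pm1}$ actually encodes the alternating cycles. As written, you assert that following a black edge and then a gray edge yields a single composite permutation, but you neither fix the orientations nor verify the claim beyond the trivial case $\pi=\mathrm{id}$. This is exactly the content of the Doignon--Labarre computation, and without it the argument is a plan rather than a proof. If you want a self-contained proof, you should either carry out that verification explicitly (it is a short case check once the black and gray orientations are fixed) or, as you suggest in your last sentence, bypass the cycle-graph model entirely and compare coefficients directly using the known count of factorizations of a long cycle into two long cycles.
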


By \cref{lem:BF}, we can express the local genus polynomial $F_n(z)$ in terms of the Stirling numbers of the first kind.

\begin{thm}\label{thm:F2S}
We have
\[
\binom{n+1}{2}F_n(z)\=\sum_{1\le k\le n+1\atop{k\equiv n\!\!\!\!\pmod2}}\!\!\!\Stir{n+1}{k} z^{\lfloor (k-1)/2\rfloor}.
\]
\end{thm}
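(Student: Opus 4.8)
The plan is to read the two sides of the identity off a single generating function and to let \cref{lem:BF} do the heavy lifting. Taking the representative $n$-cycle $\pi=\rho_n$, so that $F_n=F_{\rho_n}$ by \cref{def:FnGn} together with the type-invariance of $F_\pi$, \cref{def:F} gives
\[
F_n(z)\=\sum_{\zeta\in Q_n}z^{\lfloor(c(\zeta\rho_n)-1)/2\rfloor}.
\]
First I would collect the summands according to the common value $k=c(\zeta\rho_n)$, writing $F_n(z)=\sum_k a_{n,k}\,z^{\lfloor(k-1)/2\rfloor}$ with $a_{n,k}=|\{\zeta\in Q_n:c(\zeta\rho_n)=k\}|$. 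Since $\zeta\rho_n$ and $\rho_n\zeta$ are conjugate (by $\rho_n$), they have the same number of cycles, so $a_{n,k}$ equally counts the $n$-cycles $\zeta$ for which $\rho_n\zeta$ has exactly $k$ cycles; this is precisely the product orientation appearing in \cref{lem:BF}.

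Next I would apply \cref{lem:BF} with its index shifted by one, replacing its ``$n+1$'' by $n$, which identifies the multiplicity $a_{n,k}$ with the Hultman number $H(n-1,k)$. Substituting the closed form of \cref{def:HN} at the shifted index $n-1$ then converts each coefficient into a Stirling number: it reads $H(n-1,k)=\binom{n+1}{2}^{-1}\Stir{n+1}{k}$ when $(n-1-k)$ is odd, and $H(n-1,k)=0$ otherwise. The vanishing part is exactly the parity restriction, and ``$(n-1-k)$ odd'' is equivalent to $k\equiv n\pmod 2$; reassuringly, this matches the congruence $c(\zeta\rho_n)\equiv n\pmod 2$ established inside the proof of \cref{thm:F2G}, which both explains why only the indices $k\equiv n\pmod 2$ survive and guarantees that the floor exponent $\lfloor(k-1)/2\rfloor$ is the genuine power occurring on each term.

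Finally I would multiply through by $\binom{n+1}{2}$ to clear the denominator, obtaining
\[
\binom{n+1}{2}F_n(z)\=\sum_{1\le k\le n+1\atop{k\equiv n\!\!\!\pmod 2}}\Stir{n+1}{k}\,z^{\lfloor(k-1)/2\rfloor},
\]
where the summation range $1\le k\le n+1$ is forced by the support of $\Stir{n+1}{k}$. If one prefers to treat the floor function systematically rather than carry it through the regrouping, the same computation can be organized through $G_n$: \cref{lem:BF} and \cref{def:HN} give $G_n(z)=\binom{n+1}{2}^{-1}\sum_{k\equiv n}\Stir{n+1}{k}z^{k}$ by \cref{def:G}, and then \cref{thm:F2G} transfers this to $F_n$, the division by $(\sqrt z)^{p(\rho_n)+1}$ producing exactly the exponent $\lfloor(k-1)/2\rfloor$ in either parity. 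The genuine mathematical content is \cref{lem:BF} (the Doignon--Labarre/B\'ona--Flynn interpretation), which I am assuming; granting that, the argument is short, and the only point demanding care is the bookkeeping: aligning the $(n+1)$-point statement of \cref{lem:BF} with the $n$-point sum defining $F_n$, using the conjugacy $c(\zeta\rho_n)=c(\rho_n\zeta)$ to reconcile the orientation, and verifying that the parity condition in \cref{def:HN} pairs each surviving $\Stir{n+1}{k}$ with the correct power of $z$.
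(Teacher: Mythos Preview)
Your proof is correct and follows essentially the same route as the paper: identify the coefficients of $F_n$ with Hultman numbers via \cref{lem:BF}, then substitute the closed form \cref{def:HN} to obtain Stirling numbers. The only cosmetic difference is that you reconcile the product orientation using the conjugacy $c(\zeta\rho_n)=c(\rho_n\zeta)$, whereas the paper uses the inversion $c(\mu)=c(\mu^{-1})$ together with the bijection $\zeta\mapsto\zeta^{-1}$ on $Q_n$; and the paper routes the computation through $G_n$ and \cref{thm:F2G} (your alternative organization), rather than carrying the floor exponent directly.
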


\begin{proof}
Note that any permutation and its inverse have the same type of cycles.
By \cref{lem:BF}, it follows that the Hultman number $H(n,k)$ counts $(n+1)$-cycles~$\zeta$ such that
\[
c(\zeta^{-1}\rho_{n+1}^{-1})=k.
\]
Since the map $\zeta\mapsto\zeta^{-1}$ is an involution in the set~$Q_{n+1}$,
the Hultman number $H(n,k)$ counts $(n+1)$-cycles~$\zeta$ such that
\[
c(\zeta\rho_{n+1}^{-1})=k.
\]
Thus, from \cref{def:FnGn} and \cref{def:G}, we derive
\[
G_{n+1}(z)
\=\sum_{\zeta\in Q_{n+1}}z^{c(\zeta\rho_{n+1}^{-1})}
\=\sum_{k=1}^{n+1}H(n,k)z^k.
\]
By \cref{thm:F2G} and \cref{def:HN} of Hultman numbers, we find that
\begin{align}
F_n(z)
&\=\frac{G_{n}(\sqrt{z})}{(\sqrt{z}\,)^{p(\rho_n)+1}}
\=\frac{1}{(\sqrt{z}\,)^{p(\rho_n)+1}}\sum_{k=1}^{n}H(n-1,\,k)z^{k/2}  \label{pf1}\\
&\=\frac{1}{(\sqrt{z}\,)^{p(\rho_n)+1}\binom{n+1}{2}}\sum_{1\le k\le n+1\atop{k\equiv n\!\!\!\!\pmod2}}\Stir{n+1}{k}z^{k/2}. \notag
\end{align}
By \cref{pf:parity:p=n+1}, we have
\begin{equation}\label{p:rho}
p(\rho_n)=\begin{cases}
1,&\text{if $n$ is even};\\[5pt]
0,&\text{if $n$ is odd}.
\end{cases}
\end{equation}
The remaining proof is routine, using \cref{pf1,p:rho}.
\end{proof}
\smallskip

We remark that Stanley~\cite[Exercise 91 (c)]{Sta14} has claimed the formula
\begin{equation}\label{eq:Stanley}
\binom{n+1}{2}G_n(z)\=\sum_{i=0}^{\lfloor (n-1)/2\rfloor}c(n+1,\,n-2i)z^{n-2i},
\end{equation}
without proofs. It is easy to prove \cref{thm:F2S} by using \cref{thm:F2G,eq:Stanley}.
\smallskip

To confirm the CLLC Conjecture for cyclic permutations, we will use the Hermite--Biehler \cref{thm:HB}.  Let us recall some necessary definitions and notation to state it.
\begin{itemize}
\item A polynomial~$P(z)$ is said to be \mdef{Hurwitz stable} (resp., \mdef{weakly Hurwitz stable}) if all roots of~$P(z)$ have negative (resp., nonpositive) real parts.
\item\vskip4pt Let~$f(z)$ and~$g(z)$ be real-rooted polynomials with roots $f_1,f_2,\ldots,f_s$ and  $g_1,g_2,\ldots,g_t$, respectively.  We say that $g(z)$ \mdef{interlaces} $f(z)$, denoted $g(z)\preceq f(z)$, if
either
\[
s=t
\rmand
g_1\le f_1\le g_2\le f_2\le \cdots\le g_t\le f_s,
\]
or
\[
s=t+1
\rmand
f_1\le g_1\le f_2\le g_2\le \cdots\le g_t\le f_s.
\]
\item\vskip4pt If all the above inequalities are strict,
then we say that the polynomial~$g(z)$ \mdef{strictly interlaces}~$f(z)$,
denoted $g(z)\prec f(z)$.
\item\vskip4pt For any polynomial $P(z)=\sum_{k=0}^na_kz^k$, we define
\[
P^e(z)=\sum_{k=0}^{\lfloor{n/2}\rfloor}a_{2k}z^k
\rmand
P^o(z)=\sum_{k=0}^{\lfloor{(n-1)/2}\rfloor}a_{2k+1}z^k,
\]
and call them the \mdef{even part} and the \mdef{odd part} of the polynomial $P(z)$, respectively.
\end{itemize}

\begin{thm}[The Hermite--Biehler theorem]\label{thm:HB}
Let $P(z)$ be a polynomial with real coefficients.
Suppose that the polynomial $P^e(z)P^o(z)$ does not vanish identically.
Then the polynomial~$P(z)$ is Hurwitz stable (resp., weakly Hurwitz stable)
if and only if the polynomials~$P^e(z)$ and~$P^o(z)$
have only real and negative (resp., nonpositive) zeros,
and $P^e(z)\prec P^o(z)$ (resp., $P^e(z)\preceq P^o(z)$).
\end{thm}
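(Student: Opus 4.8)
The plan is to prove the Hermite--Biehler theorem by the classical route through the argument principle on the imaginary axis, translating the geometric stability condition into the interlacing condition on $P^e$ and $P^o$, and I would organize the work into four steps. First I would record the basic decomposition: directly from the definitions, $P(z) = P^e(z^2) + z\,P^o(z^2)$, since $P^e(z^2)$ collects exactly the even-degree monomials of $P$ and $z\,P^o(z^2)$ the odd-degree ones. Restricting to the imaginary axis $z = iy$ with $y\in\mathbb{R}$ then yields $P(iy) = P^e(-y^2) + i\,y\,P^o(-y^2)$, so that the real and imaginary parts are the real polynomials $R(y) = P^e(-y^2)$ and $I(y) = y\,P^o(-y^2)$ in the real variable $y$.

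Second, I would translate the zero conditions across the substitution $z = -y^2$, which maps the real $y$-line onto the nonpositive real $z$-axis, two-to-one except at the origin. Consequently $P^e$ has only real nonpositive zeros if and only if $R(y)$ has only real zeros, and likewise for $P^o$ and $I(y)$; moreover, since $t\mapsto -t^2$ is strictly decreasing on $[0,\infty)$ and $\sqrt{\cdot}$ is strictly monotone, interlacing of the nonpositive zeros of $P^e$ and $P^o$ in the $z$-variable corresponds to interlacing of the zeros of $R$ and $I$ on the $y$-line. Here I would be careful about the extra zero of $I$ at $y=0$ produced by the factor $y$, and about the parity of $\deg P$, which governs which of $R$ and $I$ carries the larger number of zeros; this is precisely the bookkeeping that pins down the orientation of the interlacing relation between $P^e$ and $P^o$ in the statement, and I would verify it against small cases.

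Third, and this is the heart of the matter, I would show that $P$ is Hurwitz stable if and only if $R$ and $I$ have only real, interlacing zeros together with a constant sign of the Wronskian $R'(y)I(y) - R(y)I'(y)$. I would apply the argument principle to the boundary of a large half-disk $\{\,|z|\le\Lambda,\ \operatorname{Re}z\le 0\,\}$: for all $n$ zeros to lie inside, the change of argument of $P$ around the full boundary must equal $2\pi n$; as $\Lambda\to\infty$ the semicircular arc supplies $n\pi$ up to lower-order terms, so the imaginary-axis segment must supply the remaining $n\pi$. Thus stability is equivalent to $\arg P(iy)$ increasing monotonically by $n\pi$ as $y$ runs over $\mathbb{R}$; monotonicity of the phase is equivalent to the constant sign of the Wronskian (since $\tfrac{d}{dy}\arg P(iy)$ has the sign of $R I' - R' I$), and a monotone total increase of exactly $n\pi$ forces the real zeros of $R$ and $I$ to alternate. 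I expect this to be the main obstacle, because it demands care with possible common zeros of $R$ and $I$, the precise arc contribution, behavior at infinity, and the equivalence ``monotone phase of exactly $n\pi$'' $\iff$ ``strict interlacing with the correct Wronskian sign.''

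Finally I would assemble the chain: Hurwitz stability $\iff$ (Step 3) interlacing of $R,I$ with the correct Wronskian sign $\iff$ (Step 2) $P^e$ and $P^o$ have only real negative zeros and interlace in the asserted manner. The hypothesis $P^e P^o\not\equiv 0$ guarantees that the phase $\arg P(iy)$ is nondegenerate, so the argument-principle count is legitimate. For the weak version I would argue by perturbation: $P(z)$ is weakly Hurwitz stable if and only if $P(z+\varepsilon)$ is Hurwitz stable for every $\varepsilon>0$, and then pass to the limit $\varepsilon\to 0^+$ using continuity of zeros, which turns ``negative'' into ``nonpositive'' and strict interlacing $\prec$ into weak interlacing $\preceq$.
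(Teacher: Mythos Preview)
The paper does not actually prove this theorem: its entire proof is a citation to Br\"and\'en~\cite[Theorem~4.1]{Bra11}. Your proposal, by contrast, sketches the classical argument-principle proof of the Hermite--Biehler theorem, and the outline is sound. The decomposition $P(z)=P^e(z^2)+zP^o(z^2)$ and the restriction $P(iy)=P^e(-y^2)+iyP^o(-y^2)$ are correct; the half-disk argument-principle count is the standard mechanism, and your observation that for $\operatorname{Re}z_k<0$ each factor $\arg(iy-z_k)$ is strictly increasing in $y$ is exactly what gives monotonicity of the phase and hence the Wronskian sign. The perturbation $P(z)\mapsto P(z+\varepsilon)$ for the weak case is also the right device.

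Two places deserve the care you already flag. First, the substitution $z=-y^2$ reverses order on $[0,\infty)$, and the extra factor $y$ in $I(y)$ together with the parity of $\deg P$ determines which of $P^e$, $P^o$ has the larger degree; this is what fixes the direction $P^e\prec P^o$ rather than the reverse, so that bookkeeping must be carried out explicitly. Second, in the converse direction of Step~3, you should make precise that strict interlacing of $R$ and $I$ forbids any common real zero, which is what excludes zeros of $P$ on the imaginary axis and upgrades the argument-principle conclusion from ``closed left half-plane'' to ``open left half-plane.'' With these details filled in, your proof is self-contained, whereas the paper simply defers to the literature; either choice is acceptable here since the theorem is only used as a black box in the proof of \cref{thm:RR:cyclic}.
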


\begin{proof}\vsb
This theorem is presented and proved by  Br\"and\'en~\cite[Theorem 4.1]{Bra11}.
\end{proof}

Now we are ready to demonstrate the main result of this paper.

\begin{thm}\label{thm:RR:cyclic}
The local genus polynomial $F_n(z)$ is real-rooted. Consequently,
the CLLC \cref{conj:CLLC} holds true for cyclic permutations.
\end{thm}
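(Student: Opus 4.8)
The plan is to read $F_n(z)$ off directly from the factored generating function of the Stirling numbers, and then to invoke the Hermite--Biehler theorem in the reverse of its usual direction: rather than certifying stability from the interlacing of the even and odd parts, I would start from a polynomial whose stability is transparent and harvest the real-rootedness of its parts. Concretely, set $P(z)=S_{n+1}(z)$, so that by \eqref{gf:Stirling1} we have $P(z)=z(z+1)(z+2)\cdots(z+n)$, and $a_k=\Stir{n+1}{k}$ in the notation $P(z)=\sum_k a_kz^k$ of \cref{thm:HB}.

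First I would match \cref{thm:F2S} against the even and odd parts of $P$. The sum there runs over $k\equiv n\pmod 2$. When $n$ is odd, writing $k=2j+1$ gives $\lfloor(k-1)/2\rfloor=j$, so the sum equals $\sum_j a_{2j+1}z^j=P^o(z)$; when $n$ is even, writing $k=2j$ gives $\lfloor(k-1)/2\rfloor=j-1$, so the sum equals $z^{-1}\sum_j a_{2j}z^j=z^{-1}P^e(z)$, which is a genuine polynomial because $a_0=\Stir{n+1}{0}=0$. Thus
\[
\binom{n+1}{2}F_n(z)=
\begin{cases}
z^{-1}P^e(z),&\text{$n$ even},\\
P^o(z),&\text{$n$ odd}.
\end{cases}
\]
In either case $F_n$ is, up to the positive constant $\binom{n+1}{2}^{-1}$ and possibly the removal of a root at the origin, one of the two parts $P^e,P^o$.

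Next I would observe that the roots of $P$ are $0,-1,-2,\ldots,-n$, all real with nonpositive real part, so $P$ is weakly Hurwitz stable; moreover $P^eP^o$ does not vanish identically, since every $a_k$ with $1\le k\le n+1$ is positive. The weakly-stable version of the Hermite--Biehler theorem \cref{thm:HB} then forces $P^e(z)$ and $P^o(z)$ to have only real, nonpositive zeros. In particular both parts are real-rooted, and dividing by $z$ in the even case merely discards the root at the origin while leaving a subset of the same real zeros. Hence $\binom{n+1}{2}F_n(z)$, and therefore $F_n(z)$, is real-rooted. Finally, since $F_n$ has nonnegative coefficients, the theorem of Newton recalled above yields that $F_n$ is log-concave, which is exactly the CLLC conjecture for cyclic permutations.

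The individual steps are short; the one place I would expect the argument to be most easily botched is the bookkeeping linking the floor function $\lfloor(k-1)/2\rfloor$ to the index shift defining $P^e$ and $P^o$, together with the subtlety that $P$ is only \emph{weakly} (not strictly) Hurwitz stable because of the root at $0$. This is precisely why the nonpositive/weak-interlacing form of \cref{thm:HB} must be invoked, and why the benign factor $z^{-1}$ appearing in the even case is legitimate rather than a defect.
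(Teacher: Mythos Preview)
Your proposal is correct and follows essentially the same route as the paper: identify the sum in \cref{thm:F2S} with (a shift of) the even or odd part of $S_{n+1}(z)$, note that $S_{n+1}$ is weakly Hurwitz stable because its roots are $0,-1,\ldots,-n$, and apply the Hermite--Biehler theorem in the backward direction to conclude real-rootedness. Your bookkeeping is in fact slightly more careful than the paper's, since you make explicit the harmless $z^{-1}$ factor in the even case and correctly invoke the \emph{weak} form of \cref{thm:HB} (nonpositive rather than negative zeros) to accommodate the root of $S_{n+1}$ at the origin.
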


\begin{proof}\vsb
From \cref{gf:Stirling1}, we see that the roots of the generating function
\[
S_n(z)=\sum_{k=1}^n\Stir{n}{k}z^k
\]
of the Stirling numbers are $0$, $-1$, $-2$, $\ldots$, $1-n$.  Thus, the function~$S_n(z)$ is weakly Hurwitz stable.  Note that the polynomial
\[
\sum_{1\le k\le n+1\atop{k\equiv n\!\!\!\!\pmod2}}\Stir{n+1}{k}z^{\lfloor (k-1)/2\rfloor}
\]
is either the even part or the odd part of the polynomial $S_{n+1}(z)$,
which is weakly Hurwitz stable.
By \cref{thm:HB,thm:F2S}, both the even part and odd part of the polynomial $S_{n+1}(z)$ have only real and negative zeros.  Therefore, the polynomial above has only real zeros.
By Theorem 4.2, the polynomial $F_n(z)$ is real-rooted.
\end{proof}

\medskip

We now sketch an alternative proof of \cref{thm:RR:cyclic}.
The following recurrence
for the generating function $H_g(x)=\sum_{n=2g}^\infty h_g(n) x^n$ of the Hultman numbers is provided by Alexeev and Zograf~\cite[Theorem 3]{AZ14}:
$$
(n+2)h_g(n)=(2n+1)h_g(n-1) - (n-1)h_g(n-2) + n^2(n-1)h_{g-1}(n-2)
$$
where $h_g(n) = H(n,n+1-2g)$.
From this, one can easily prove that the local genus polynomials~$F_n(z)$ satisfy the recurrence
\begin{equation}\label[rec]{rec:F}
(n+2)F_{n}(z)\=(2n+1)z^{e(n)}F_{{n-1}}(z)+(n-1)(n^2-z)F_{{n-2}}(z),
\end{equation}
with $F_{0}(z)=F_{1}(z)=1$, where the function
\[
e(n)
\=\begin{cases}
1,&\text{if $n$ is even}\\[5pt]
0,&\text{if $n$ is odd}
\end{cases}
\]
indicates the evenness of the integer $n$.
Then, by invoking the following criterion implied immediately by~\cite[Corollary 2.4]{LW07} of Liu and Wang, one may deduce the real-rootedness of the polynomial $F_n(z)$ directly.

\begin{thm}
Let $\{P_n(x)\}$ be a sequence of standard polynomials and satisfy the recurrence
relation
$$P_n(x) = a_n(x)P_{n-1}(x) + b_n(x)P'_{n-1}(x) + c_n(x)P_{n-2}(x),$$
where the three coefficients $a_n(x)$, $b_n(x)$, and $c_n(x)$ are real polynomials such that \hbox{$\deg P_n(x) = \deg P_{n-1}(x)$} or $\deg P_n(x) = \deg P_{n-1}(x) + 1$.  Suppose that for each $n$, the coefficients of $P_n(x$) are nonnegative.  If $b_n(x) \le 0$ and $c_n(x) \le 0$ whenever $x \le 0$, then $\{P_n(x)\}$ forms a generalized Sturm sequence.  In particular, if for each index $n$, deg $P_n(x) = n$ and either $b_n(x) < 0$ or $c_n(x) < 0$ whenever $x \le 0$, then $\{P_n(x)\}$ forms a Sturm sequence.  \qed
\end{thm}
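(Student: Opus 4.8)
The plan is to argue by induction on $n$, carrying along the inductive invariant that each $P_n(x)$ is real-rooted with all of its roots in $(-\infty,0]$ (which follows from the nonnegativity of its coefficients together with real-rootedness) and that consecutive members interlace in the sense $P_{n-1}(x)\preceq P_n(x)$. In the paper's notation, a \emph{generalized Sturm sequence} is precisely such a family, while a \emph{Sturm sequence} is the stronger situation in which $\deg P_n=n$ and the interlacing is strict, $P_{n-1}(x)\prec P_n(x)$. The base of the induction is supplied by the hypothesis that the $P_n$ are standard, which furnishes real-rooted $P_0,P_1$ with $P_0\preceq P_1$; the degree hypothesis $\deg P_n\in\{\deg P_{n-1},\,\deg P_{n-1}+1\}$ keeps the root count under control throughout.

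The engine of the induction is a sign-change count for $P_n$ along the roots of $P_{n-1}$. First I would record the two classical interlacing facts that feed the recurrence. By Rolle's theorem the derivative interlaces its antiderivative, so $P'_{n-1}(x)\preceq P_{n-1}(x)$; moreover, if $u_1<u_2<\cdots<u_d$ are the simple roots of $P_{n-1}$, then the values $P'_{n-1}(u_i)$ strictly alternate in sign as $i$ increases. Likewise, the inductive hypothesis $P_{n-2}(x)\preceq P_{n-1}(x)$ forces the values $P_{n-2}(u_i)$ to alternate in sign as well, since the roots of $P_{n-2}$ separate those of $P_{n-1}$. Evaluating the recurrence at each root and using $P_{n-1}(u_i)=0$ gives
\[
P_n(u_i)=b_n(u_i)\,P'_{n-1}(u_i)+c_n(u_i)\,P_{n-2}(u_i).
\]
Every $u_i$ lies in $(-\infty,0]$, so the hypotheses $b_n(x)\le 0$ and $c_n(x)\le 0$ for $x\le 0$ apply at each $u_i$.

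The crux is then to check that the two summands reinforce rather than cancel. Multiplying the alternating sequence $\bigl(P'_{n-1}(u_i)\bigr)_i$ by the nonpositive factors $b_n(u_i)$, and the alternating sequence $\bigl(P_{n-2}(u_i)\bigr)_i$ by the nonpositive factors $c_n(u_i)$, produces two sequences carrying the \emph{same} alternating sign pattern, so their sum $P_n(u_i)$ again alternates. Hence $P_n$ has at least one root strictly between each consecutive pair $u_i,u_{i+1}$, accounting for $d-1$ real roots. Combining this with the sign of $P_n$ beyond the extreme roots, controlled by the positive leading coefficient coming from nonnegative coefficients, and with the degree bookkeeping ($\deg P_n$ equal to $d$ or $d+1$), a root count shows that every root of $P_n$ is real and that $P_{n-1}(x)\preceq P_n(x)$, closing the induction for the generalized statement. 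For the sharper conclusion, the extra hypotheses $\deg P_n=n$ together with $b_n(x)<0$ or $c_n(x)<0$ on $x\le 0$ make each $P_n(u_i)$ strictly nonzero, upgrading every weak inequality to a strict one and yielding a genuine Sturm sequence $P_{n-1}(x)\prec P_n(x)$.

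I expect the main obstacle to be the sign-pattern compatibility in the crucial step: ensuring that the alternation of $P'_{n-1}(u_i)$ and the alternation of $P_{n-2}(u_i)$ are \emph{in phase}, so that after multiplication by the nonpositive coefficients the two contributions add constructively rather than destructively. This requires pinning down the exact orientation of each alternating pattern relative to the ordering of the $u_i$, and handling the degenerate cases with care: a common root of $P_{n-1}$ and $P_{n-2}$ at which one summand vanishes and one must fall back on the other, a root at $x=0$ where the sign hypotheses are only weak, and the boundary case $\deg P_n=\deg P_{n-1}$ in which generalized rather than strict interlacing is the best possible. The remaining endpoint and leading-coefficient bookkeeping needed to capture the final root is routine once the interior sign changes are secured.
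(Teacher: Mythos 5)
The paper offers no proof of this theorem at all: it is quoted, with a terminal \qed, as an immediate consequence of Corollary~2.4 of Liu and Wang~\cite{LW07}, so your proposal is not paralleling the paper but reconstructing the argument inside the cited work (essentially Liu--Wang's proof of their Theorem~2.3). Your reconstruction is sound in outline, and the obstacle you flag as the crux resolves favorably: since every $P_m$ is standard (positive leading coefficient, forced here by nonnegative coefficients), if $u_1<u_2<\cdots<u_d$ are the roots of $P_{n-1}$ then $\operatorname{sgn}P'_{n-1}(u_i)=(-1)^{d-i}$, and the inductive interlacing $P_{n-2}\preceq P_{n-1}$ gives $\operatorname{sgn}P_{n-2}(u_i)=(-1)^{d-i}$ weakly as well --- the two alternating patterns are automatically in phase, both being anchored at the rightmost root $u_d$ where positivity of leading coefficients makes both values nonnegative. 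Since the $u_i$ lie in $(-\infty,0]$ (again by nonnegativity of the coefficients), multiplying by $b_n(u_i)\le 0$ and $c_n(u_i)\le 0$ gives both summands the sign $(-1)^{d-i+1}$, so they reinforce, exactly as you hoped. What your sketch defers --- multiple roots of $P_{n-1}$, the possibility that $P_n(u_i)=0$ under the weak hypotheses, roots at $x=0$, and the $\deg P_n=\deg P_{n-1}$ boundary case --- is real work, but it is precisely what Liu and Wang's formalism of generalized Sturm sequences with weak interlacing $\preceq$ is engineered to absorb, and the strict hypotheses ($\deg P_n=n$, $b_n<0$ or $c_n<0$ on $x\le 0$) upgrade $\preceq$ to $\prec$ just as you indicate. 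The trade-off between the two routes is the usual one: the paper's citation keeps its Section~4 remark short, while your approach makes the criterion self-contained at the price of the degenerate-case bookkeeping; one small caution is your base-case claim that standardness alone ``furnishes'' real-rooted $P_0,P_1$ with $P_0\preceq P_1$ --- this holds in the intended applications because the initial polynomials have degree at most one, but it should be stated as a hypothesis rather than derived.
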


\bigskip
\section{\large Conclusions}

Stahl \cite{Sta97} proved that the genus polynomials for several sequences of graphs are real-rooted genus polynomials, and he proposed a conjecture stronger than the LCGD conjecture, namely, that genus polynomials of all graphs are real-rooted.  Beyond Stahl's examples, several other sequences, including the iterated claws~\cite{GMTW14X}, have been proved to have real-rooted genus polynomials.   However, Chen and Liu~\cite{CL10} have provided a counterexample to Stahl's conjecture.  Interestingly, the counterexample graphs, i.e., which are some chains of $4$-wheels, have been shown~\cite{GMTW15} recently to have log-concave genus polynomials.

Based on brute force computation for partitions of integers up to~$11$,
we offer the following strengthening of the CLLC Conjecture.

\begin{conj}\label{conj:RR}
The polynomial $F_\lambda(x)$ is real-rooted for all partitions~$\lambda$.  In other words, the IO-polynomial $F_\pi(x)$ is real-rooted for all permutations~$\pi$.
\end{conj}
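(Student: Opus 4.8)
The plan is to lift the Hermite--Biehler argument behind \cref{thm:RR:cyclic} from the single-cycle case to an arbitrary type $\lambda$, by replacing the parent polynomial $S_{n+1}(z)$ of \cref{gf:Stirling1} with a type-dependent stable parent whose even or odd part recovers $G_\lambda$. Since \cref{conj:RR} remains open, what follows is a program rather than a finished argument. First I would reduce to partitions all of whose parts are at least $2$: by \cref{cor:fp}, appending a fixed point multiplies $F_\lambda$ by a positive integer, and multiplication by a positive constant preserves real-rootedness, so the $1$-parts may be stripped off at no cost.

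Second, I would recast real-rootedness of $F_\lambda$ as a root-location statement for $G_\lambda$. By the parity identity established in the proof of \cref{thm:F2G}, every exponent $c(\zeta\pi)$ appearing in $G_\lambda(w)=\sum_{\zeta\in Q_n}w^{c(\zeta\pi)}$ shares the fixed parity of $p(\pi)+1$, so $G_\lambda(w)=w^{\delta}\widehat G_\lambda(w^2)$ for some $\delta\in\{0,1\}$. Comparing with \cref{F2G}, the polynomials $F_\lambda(z)$ and $\widehat G_\lambda(z)$ agree up to a power of $z$; hence $F_\lambda$ is real-rooted with nonpositive roots if and only if all zeros of $G_\lambda(w)$ lie on the imaginary axis. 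This is exactly the output of the Hermite--Biehler \cref{thm:HB}: if $P_\lambda(w)$ is a real, weakly Hurwitz stable polynomial whose even part $P^e_\lambda$ (or odd part $P^o_\lambda$) equals $\widehat G_\lambda$, then $\widehat G_\lambda$ has only real nonpositive zeros and we are done. In the cyclic case the parent is $P_n=S_{n+1}$, weakly Hurwitz stable because its roots are $0,-1,\dots,-n$.

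Third — the substantive step — I would construct and analyze $P_\lambda$ for general $\lambda$. Applying the Frobenius character formula to the connection coefficients $\#\{\zeta\in Q_n:\ \zeta\pi\text{ has type }\nu\}$, summing $w^{\ell(\nu)}$ over $\nu$ via the content specialization $p_k\mapsto w$ of Schur functions, and using that the full-cycle character is supported on hook shapes, one obtains
\begin{equation}\label{eq:content}
G_\lambda(w)\=\sum_{r=0}^{n-1}b_{\lambda,r}\prod_{j=-r}^{\,n-1-r}(w+j),
\end{equation}
where $b_{\lambda,r}$ is an explicit constant proportional to the hook character $\chi_\lambda(n-r,1^r)$ (and may be negative), the displayed linear factors being the content product of the hook $(n-r,1^r)$. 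Each summand is real-rooted with integer zeros, but the sum is signed, so real-rootedness is not automatic. The aim is to realize the right side of \cref{eq:content} as the even or odd part of a single weakly Hurwitz stable $P_\lambda$. I would attempt this by induction on the number of parts of $\lambda$: remove one cycle, relate $P_\lambda$ to $P_{\lambda'}$ through a stability-preserving operation, and propagate the interlacing hypotheses of \cref{thm:HB}. As a parallel route, one may seek, for a fixed collection of the remaining parts, a three-term recurrence in a single part length and invoke the Liu--Wang criterion quoted above (\cite{LW07}), mirroring how the Alexeev--Zograf \cref{rec:F} settles $F_n$.

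The main obstacle is precisely the entanglement visible in \cref{eq:content}. Unlike the single-cycle case, $G_\lambda$ does not factor over the parts of $\lambda$, since one long cycle $\zeta$ mixes all cycles of $\pi$; thus multiplicativity cannot reduce the problem to the already-settled factors. Controlling the signed character sum — proving that this particular combination of hook content-products confines its zeros to the imaginary axis, equivalently that some natural parent lies in the weakly Hurwitz-stable cone — is the crux. Establishing either the stability-preserving inductive step that adjoins a cycle while keeping the Hermite--Biehler interlacing, or the sign conditions ($b_n(x)\le 0$, $c_n(x)\le 0$ for $x\le 0$) needed for a clean recurrence in one part, is where the real difficulty lies.
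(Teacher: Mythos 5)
You should first be aware that the statement you were asked to prove is \cref{conj:RR}, which the paper itself leaves open: it is offered as a strengthening of the CLLC \cref{conj:CLLC}, supported only by brute-force computation for partitions of integers up to~$11$, so there is no proof in the paper to compare against. Your submission honestly presents itself as a program rather than a proof, and as far as it goes it is sound. The reduction to partitions with all parts at least~$2$ via \cref{cor:fp} is correct; the parity argument of \cref{thm:F2G} does hold for arbitrary $\pi\in\sg_n$, so $G_\lambda(w)=w^{\delta}\widehat G_\lambda(w^2)$ and the equivalence between real-rootedness of $F_\lambda$ (whose coefficients are nonnegative, so real roots are automatically nonpositive) and purely imaginary zeros of $G_\lambda$ is right; and your hook-content expansion of $G_\lambda$ is essentially the Frobenius-formula computation that underlies Stanley's \cref{eq:Stanley}, which is its $\lambda=(n)$ specialization.

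The genuine gap is exactly where you place it, but it is worth saying why the paper's machinery does not cross it. The cyclic case in \cref{thm:RR:cyclic} succeeds for a reason that is invisible at the level of your signed expansion: for $\lambda=(n)$ the polynomial $\widehat G_\lambda$ is \emph{identically} the even or odd part of the explicitly factored polynomial $S_{n+1}(z)=z(z+1)\cdots(z+n)$, whose weak Hurwitz stability is free, so \cref{thm:HB} applies with no work. For general $\lambda$ the hook-character coefficients $\chi^{(n-r,1^r)}(\lambda)$ oscillate in sign with magnitudes depending on $\lambda$, and no candidate stable parent $P_\lambda$ is known; your inductive step of ``adjoining a cycle'' is a convolution in the class algebra of $\sg_n$ (indeed in groups of growing degree), not a polynomial operation on $G_{\lambda'}$, so the standard interlacing-preserving operators that feed \cref{thm:HB} do not obviously apply. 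Similarly, the Liu--Wang route works for $F_n$ only because the Alexeev--Zograf recurrence behind \cref{rec:F} comes from a specific matrix-model computation for Hultman numbers; no analogue is known with one part varying and the others fixed, and verifying the sign conditions $b_n(x)\le 0$, $c_n(x)\le 0$ for $x\le 0$ would require such a recurrence first. Note also that for general multi-part $\lambda$ even the weaker log-concavity assertion of \cref{conj:CLLC} is open beyond the types listed in \eqref{par} and \cref{thm:invl}, so your third step is asking for strictly more than the state of the art. In short: your program is a faithful and plausible generalization of the paper's method, with the crux correctly identified, but the statement remains a conjecture and your text does not settle it.
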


Hultman numbers were used in an enumerative analysis of the cycle graph structure with alternating cycles, which was introduced by Bafna and Pevzner~\cite{BP98} in their pursuit of a shortest sequence of transpositions that transforms a given permutation into the identity permutation.  This graph structure was further studied by various other authors, including \cite{DL07,BF09}.   It has proved to be a powerful tool in evaluating the distance between two genomes in computational biology (see~\cite{AZ14,FMT14}).

\bigskip

\end{document}